\newtheorem{theorem}{Theorem}[section]
\newtheorem{proposition}[theorem]{Proposition}
\newtheorem{corollary}[theorem]{Corollary}
\newtheorem*{theorem*}{Theorem}
\newtheorem*{lemma*}{Lemma}
\newtheorem*{proposition*}{Proposition}
\newtheorem*{corollary*}{Corollary}
\newtheorem*{definition*}{Definition}
\theoremstyle{remark}
\newtheorem{remark}[theorem]{Remark}
\newtheorem{example}[theorem]{Example}
\newtheorem*{conventions*}{Conventions}
\newtheorem*{example*}{Example}
\numberwithin{equation}{section}
\newcommand{\Mbar}{\overline{M}}
\newcommand{\p}{\partial}
\newcommand{\op}{\operatorname}
\newcommand{\so}{\mathscr{O}}
\newcommand{\C}{\mathbb{C}}
\newcommand{\sO}{\mathscr{O}}
\newcommand{\Pp}{P}
\newcommand{\Bl}{\operatorname{Bl}}
\newcommand{\fX}{\mathfrak{X}}
\newcommand{\NE}{\operatorname{NE}}
\title{Variations on the theme of quantum Lefschetz}
\author[H.~Fan]{Honglu Fan}
\address{D-Math, ETH Z\"urich, R\"amistrasse 101, 8092, Z\"urich, Switzerland}
\email{honglu.fan@math.ethz.ch}
\author[Y.-P.~Lee]{Yuan-Pin~Lee}
\address{Department of Mathematics, University of Utah,
Salt Lake City, Utah 84112-0090, U.S.A.}
\email{yplee@math.utah.edu}
\date{\today}
\begin{document}

\maketitle

\begin{abstract}
In this companion piece to \cite{FL2}, some variations on the main results there are sketched.
In particular, 
\begin{itemize}
\item the recursions in \cite{FL2}, which we interpreted as the quantum Lefschetz, is reformulated in terms of Givental's quantization formalism, or equivalently, a summation of \emph{finitely many} graphs;
\item varieties of modification of the auxilliary spaces (masterspaces) for the fixed point localization are given, leading to different (looking) recursions;
\item applications of this circle of ideas to derive (apparently) new relations of Gromov--Witten invariants. 
\end{itemize}
\end{abstract}

\setcounter{section}{-1}

\section{Introduction}
Establishing a \emph{quantum Lefschetz hyperplane theorem} (QLHT) is an important problem in the Gromov--Witten theory.
From the enumerative point of view, one often encounters enumerative problems on hypersurfaces or complete interesections whose ambient spaces have much easier corresponding enumerative problems.
The celebrated quintic threefold in $P^4$ is one such example, where the Gromov--Witten invariants of $P^4$ have been completely determined by A.~Givental in \cite{aGhg}.
From the functoriality point of view, the QLHT and the quantum Leray--Hirsch are the two pillars in the study of the functoriality of Gromov--Witten theory \cites{LLW-string, LLW-ams}.
The quantum Leray-Hirsch for toric fibration was esatablished in genus zero \cites{GB, LLW2}, and in higher genus \cite{CGT}.
However, the quantum Lefschetz for higher genus was only available in very limited form.
For Calabi--Yau threefolds, there have been various approaches, most of which put a special focus on the quintic hypersurface of $\Pp^4$.  In genus $1$, there are works in \cites{Z,LK} among others. In higher genus there is an ongoing \emph{mixed spin $P$-field} approach pioneered by H.-L.~Chang, S.~Guo, J.~Li, W.-P.~Li and C.-C.~Liu in, e.g., \cites{CLLL, CGLZ, CGL, CGL2}. S.~Guo, F.~Janda and Y.~Ruan in \cites{GJR, GJR2} proposed a new method which determines higher genus invariants of quintic $3$-folds via twisted invariants of the ambient space $\Pp^4$ plus some ``effective invariants'' which are finite for a fixed genus. In the same paper the authors used this in genus $2$ to prove the BCOV holomorphic anomaly conjecture \cite{BCOV2}, and we expect it to be generalized to higher genus. 
However, beyond the Calabi--Yau threefolds, there was no proposal until our previous work.

The goal of this paper is to amplify a few points in \cite{FL2}. Therefore, this is a companion piece and will tacitly assume that the readers are familiar with the previous paper.
There we derive a recursion relation which computes the GW invariants of an ample hypersurface in terms of (twisted) GW invariants of the ambient space and a finite number of ``initial data'' for each genus.
The initial data are finite number of GW invariants on the hypersurface of low degrees.
In this paper, we will reinterpret the recursion relation in the frame work of Givental's quantization formalism \cites{aGhg, aGqh}.
By way of the quantization formalism, the recusion can be written as a summation of finite hypergraphs for each fixed genus.
This interpretation simplifies when the hypersurface in question is a Calabi--Yau threefold, but it applies to the general case.
Another feature of this approach is its flexibility of allowing primitive class insertions in computing GW invariants of the hypersurfaces, cf.\ Remark~\ref{r:prim}.
This is sketched in Section~1.

The second point is to explore varieties of related but different recursions using the same technique: localization on the masterspace.
Utilizing different masterspaces, we reached different recursions. 
In Section~2.1, the GW theory of the hypersurfaces are related to different twisted theories of the ambient spaces.
In Section~2.2, the original masterspace for the quintic is, by way of $K$-equivalence (crepant transformation), modified to an orbifold masterspace, where one of the fixed point becomes an orbifold point $B \mu_5$.
The GW theory of quintic is thus related orbifold theory of $B \mu_5$ recursively.

In Section~3, we flip the coin and consider the case when the hypersurface has trivial GW theory.
The aformentioned relation thus becomes a relation of GW invariants of the ambient space.
This works, for example, if the ambient space is a Fano threefold and the hypersurface is a K3 suface, representing the canonical divisor.
We wrote down some sample equestions in this case and hope to come back to applications of these types of equations in the future.

\subsection*{Acknowledgement}
We wish to thank T.~Jarvis, N.~Priddis and Y.~Ruan, the organizers of the Snowbird Conference and School, for the invitations to attend the conference and to submit the proceedings article.
Part of the work was done while both authors were in Snowbird.
We are also grateful to M.~Shoemaker for stimulating discussions.
The research of H.~F. is supported by SwissMAP and grant ERC-2012-AdG-320368-MCSK in the group of Rahul Pandharipande at ETH Z\"urich. The research of Y.P.~L. was partially supported by the NSF.

\section{Quantum Lefschetz in generating functions and finite graphs}

The goal of this section is to reformulate some results in \cite{FL2} in terms of generating functions.
We assume the readers are familiar with \cite{FL2}.
Whenever possible, we follow the notations in \cite{FL2} and \cites{CGT, LPbook}.

\subsection{Recursions in Givental's quantization formalism}
Recall the setup in \cite{FL2}.
Let $D\subset X$ be a smooth hypersurface in a smooth projective variety $X$ and let 
\[
\fX=Bl_{D\times \{0\}}X\times P^1,
\]
be the ``masterspace'' associated to $(X,D)$.
There is a birational morphism $\fX\rightarrow X\times P^1$, and it can be composed with projections to $X$ and $P^1$. 
We denote the first composition by $p:\fX\rightarrow X$ and the second by $\pi:\fX\rightarrow P^1$. 
The fiber 
\[
\fX_0 := \pi^{-1}(\{0\}) \cong X \cup_D P_{D}(\sO\oplus\sO(D))
\] 
is the union of $X$ and $P_{D}(\sO\oplus\sO(D))$. These two pieces glue transversally along the hypersurface $D \subset X$ and the section
\[
D \cong P_{D}(\sO(D))\subset P_{D}(\sO\oplus\sO(D)).
\]
As in \cite{FL2}, we use the following notations
\begin{itemize}
	\item $X_\infty :=\pi^{-1}(\{\infty\}) \cong X$;
	\item $X_0$ is the irreducible component of $\fX_0$ which is isomorphic to $X$;
	\item $D_0 :=P_D(\sO)\subset P_{D}(\sO\oplus\sO(D)) \subset \fX_0$.
\end{itemize}

Endow $P^1$ with the $\C^*$ action such that it has weight $-1$ on $T_0 P^1$. 
Since $\fX$ is the blow-up of a fixed locus, \emph{there is an induced $\C^*$ action on $\fX$}. 
This action acts trivially on $X$, but scales the fibers of the $P^1$ fibration $P_{D}(\sO\oplus\sO(D))$.
Under this $\C^*$ action, the fixed loci are
\[
 \text{(a)} \, D_0, \quad \text{(b)} \, X_0, \quad \text{(c)} \, X_{\infty} .
\]
Also as in \cite{FL2}, when the $\C^*$ acts on the fibers on $E$ by ``positive'' scaling, sending a vector $v$ to $\lambda v$, we write $E^+$ for the equivariant bundle and use the notation
$\langle \dotsb \rangle_{g,n,\beta}^{X,E^+}$
for the corresponding equivariant twisted invariant.
Similarly, $\langle \dotsb \rangle_{g,n,\beta}^{X,E^-}$ stands for invariants with inverse scaling action $ v\mapsto \lambda^{-1} v$. 
In general, if these distinctions are not called for, $\langle \dotsb \rangle_{g,n,\beta}^{X,tw}$ will stand for general twisted invariants.

Let $\{ e_{\mu} \}$ be a basis of $H(X)$. 
The equivariant total descendant potential of $X$ is denoted by
\[
 \mathcal{D}_{eq}^{X} (t) = \exp \left(\sum_{g=0}^{\infty} \hbar^{g-1} F_g^{X} (t) \right)
 \]
 \text{where}
 \begin{equation}
     F_g^{X} (t) = \sum_{\beta} \sum_{n=0}^{\infty} \frac{q^{\beta} \langle t, \ldots, t \rangle_{g, n, \beta}^{\fX}}{n!}, \quad 
 t := \sum_{k, \mu} t^{\mu}_k e_{\mu} \psi^k. 
 \end{equation}
We note that the summation over $n$ includes $n=0$.
By the results of \cites{aGhg, aGqh, CGT} we have
\begin{equation} \label{e:1}
 \mathcal{D}_{eq}^{\fX} = \widehat{M}^{\fX} \left( \mathcal{D}^D \mathcal{D}^{X_0} \mathcal{D}^{{X}_{\infty}} \right),
\end{equation}
where $\widehat{M}$ is an operator coming from quantization of quadratic hamiltonians \cites{aGqh, ypLnotes}, and similar to the one in \cite{CGT}*{Theorem~1.4}.
Note that here our one-dimensional orbits of the $\mathbb{C}^*$ action are not isolated, but they do not depend on any parameter, i.e, constant in families, much like the situation in \cite{CGT}.
Therefore the formula there holds without any modification.
We don't really need to know the exact formula for $\widehat{M}^{\fX}$.
What we need is that $\widehat{M}^{\fX}$ is a product of a few operators, the most important of which is $\widehat{R}$
\begin{equation} \label{e:R}
 \widehat{R} = \exp \left( \frac{\hbar}{2} \sum_{\mu_1, \mu_2} \sum_{k_1, k_2} \Delta^{\mu_1 \mu_2}_{k_1 k_2} \frac{\partial}{\partial t^{\mu_1}_{k_1}}  \frac{\partial}{\partial t^{\mu_2}_{k_2}} \right),
\end{equation}
where $\Delta$ can be obtained from certain generating function of genus zero invariants, or equivalently the fundamental solution of the Dubrovin connection.
When $\widehat{R}$ acts on the product of $\mathcal{D} =\exp { F}$, it can be organized by \emph{hypergraphs} where $\Delta$ accounts for the edge factor and $\p^{\mu_1}_{k_1} \p^{\mu_2}_{k_2}$ connects two (not necessarily distinct) \emph{hypervertices}  on which two differentiations act.
See Equation (17) in \cite{CGT} and Part II of \cite{LPbook} for more details.

Implicit in the Equation~\eqref{e:1} is the dependence on $\op{NE}(\fX)$ and $H(\fX)$.
There is a natural map 
\[
(\iota_{D_0})_*: \op{NE} (D) \to \op{NE}(\mathfrak{X}),
\] 
where $\iota_{D_0}: D_0 \to \fX$ is the embedding of the fixed component.
Let $H(D)_X$ be the subgroup of $H(D)$, pulled back from $H(X)$.
There is a natural lifting 
\[
H(D)_X  \to H(\fX), 
\]
by pulling back the class in $X$ to $\fX$.
Denote their images by $\op{NE}(\fX)_D$ and $H(\fX)_D$ respectively.
Let $\{ T_{\mu} \}$ now stand for a basis of $H(D)_X$.
By abusing the notation, the same symbols $\beta_D$ and $t_D := \sum_{\mu, k} t^{\mu}_k T_{\mu} \psi^k$ will be used for the classes and their images under the above maps.
We now \emph{restrict the variables of GW theory on $\fX$ from $\op{NE}(\fX)$ to $\op{NE}(D)$ (for Novikov variables $q^{\beta_D}$) and $H(\fX)$ to $H(D)$ (for $t_D$)}. Let $\mathcal{D}^{X_\infty, \so^+}$ be the total descendant potential of $X_\infty$ twisted by $\so$ with fiberwise $\C^*$ action of weight $1$. 

\begin{proposition} \label{p:1}
With the restriction of $t=t_D$ and $q^{\beta} = q^{\beta_D}$, we have
\begin{equation} \label{e:2}
  \mathcal{D}_{eq}^{\fX} = \widehat{M} \left( \mathcal{D}^D \mathcal{D}^{X_0} \right) \mathcal{D}^{{X}_{\infty}, \so^+} ,
\end{equation}
where $\widehat{M}$ acts only on the first two $\mathcal{D}$'s, and the last factor is the descendant potential twisted by the Euler class of the trivial bundle.
\end{proposition}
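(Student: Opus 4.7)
The plan is to unpack equation~\eqref{e:1} as a hypergraph sum and show that the restriction $t = t_D$, $q^{\beta} = q^{\beta_D}$ detaches the $X_\infty$-vertex from the rest. In the hypergraph expansion, each factor $\mathcal{D}^{D}$, $\mathcal{D}^{X_0}$, $\mathcal{D}^{X_\infty}$ is a type of hypervertex, and each application of $\widehat{R}$ from~\eqref{e:R} glues two hypervertices with the edge tensor $\Delta^{\mu_1\mu_2}_{k_1k_2}$, built from the genus-zero two-point generating function (equivalently the fundamental solution of the Dubrovin connection) on $\fX$.

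The geometric heart of the argument is that $\pi_*\beta = 0$ for every $\beta \in \NE(D)$, so any stable map of class $\beta_D$ is supported in the central fiber $\fX_0$ and cannot reach $X_\infty$. Consequently, every edge incident to the $X_\infty$-vertex vanishes after the restriction: such an edge would require either a two-point invariant of positive $\pi$-degree (excluded by $q^{\beta}=q^{\beta_D}$) or a degree-zero two-point invariant with one insertion near $X_\infty$ and one pulled back from $D$ (which vanishes classically, since $D\subset\fX_0$ is disjoint from $X_\infty$). Hence only hypergraphs in which the $X_\infty$-vertex is isolated survive. The isolated $X_\infty$-vertex contributes its pure vertex factor from the localization-to-quantization dictionary: the descendant potential of $X_\infty$ twisted by the equivariant Euler class of $N_{X_\infty/\fX}\cong T_\infty P^1$. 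The latter is a trivial line bundle of $\C^*$-weight $+1$ (dual to the prescribed weight $-1$ on $T_0P^1$), so this factor is exactly $\mathcal{D}^{X_\infty,\sO^+}$. The remaining hypergraphs involve only the $D$ and $X_0$ vertices, and we define $\widehat{M}$ as the part of $\widehat{M}^{\fX}$ that assembles from them; this gives~\eqref{e:2}.

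The main difficulty I expect is a clean bookkeeping of how the edge tensor $\Delta$ interacts with the subspace inclusions $\NE(D)\hookrightarrow\NE(\fX)$ and $H(D)_X\hookrightarrow H(\fX)$. A priori $\Delta$ involves classes on $\fX$ not pulled back from $D$, so one must check that all two-point contributions supported on such classes, in particular those flowing into $X_\infty$, are annihilated by the restriction. The $\pi$-degree analysis, combined with the equivariant divisor/string equations, should suffice, but the verification needs to be carried out carefully to ensure that $\widehat{M}$ is well-defined on the restricted variables and that no hidden $X_\infty$ contribution is missed.
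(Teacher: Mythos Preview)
Your proposal is correct and follows essentially the same route as the paper: both argue that the curve-class restriction (no curve in $\NE(D)$ can reach $X_\infty$, cf.\ \cite{FL2}*{\S~2.3}) isolates the $X_\infty$-factor, leaving $\widehat{M}$ to act only on $\mathcal{D}^D\mathcal{D}^{X_0}$. The one cosmetic difference is that the paper identifies the surviving $X_\infty$-contribution via the quantum Riemann--Roch operator $(\widehat{\Gamma}^{X_\infty})^{-1}$ of \cite{CG}---this is the sole piece of $\widehat{M}^{\fX}$ touching $X_\infty$, and removing it from $\widehat{M}^{\fX}$ defines $\widehat{M}$ while simultaneously converting $\mathcal{D}^{X_\infty}$ to $\mathcal{D}^{X_\infty,\sO^+}$---whereas you read off the twisted potential directly from the normal bundle $N_{X_\infty/\fX}\cong\sO^+$; these are equivalent.
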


\begin{proof}
This follows from \eqref{e:1}. 
The only difference is the single factor of $\mathcal{D}^{X_\infty, \so^+}$, on which the operator $\widehat{M}$ does not act.
As in \cite{FL2}*{\S~2.3}, the restriction of the curve classes implies that there will be no curve connecting $X_{\infty}$ with $\fX_0$.
The contribution from $X_{\infty}$ therefore is a single factor by itself as twisted invariants.
We also note that, due to the quantum Riemann--Roch theorem in \cite{CG}, there is an (explicit) operator $\widehat{\Gamma}^{X_\infty}$ such that 
\begin{equation} \label{e:qrr}
 \mathcal{D}^{X_{\infty}, \so^+} = \left( \widehat{\Gamma}^{X_{\infty}} \right)^{-1} \mathcal{D}^{X_{\infty}},
 \end{equation}
where $\Gamma$ depends on the twisting data.
\eqref{e:qrr} similarly applies to $D$ and $X_0$.
$\left( \widehat{\Gamma}^{X_{\infty}} \right)^{-1}$ was part of the ingredient in $\widehat{M}^{\fX}$ and the only one which concerns $X_{\infty}$. Removing $\left( \widehat{\Gamma}^{X_{\infty}} \right)^{-1}$ from $\widehat{M}^{\fX}$ we get $\widehat{M}$ acting only on the first two factors. The process also takes $\mathcal{D}^{X_{\infty}}$ back to $\mathcal{D}^{{X}_{\infty}, \so^+}$, as the localization formula dictates.
\end{proof}

The recursion in \cite{FL2} can be phrased in this context as follows.
Let 
\[ 
 I:= \langle \Omega \rangle^{D}_{g, n, \beta_D}, \quad \Omega= \prod_i T_{\mu_i} \psi_i^{k_i}
\]
 be the GW invariant in question on $D$.
That is, $I$ is the coefficient of $q^{\beta_D} \prod_i t^{\mu_i}_{k_i}$ in $F_g^D (t_D)$, or equialently, the coefficient of $\hbar^{g-1}q^{\beta_D} \prod_i t^{\mu_i}_{k_i}$ in $\log \mathcal{D}^{D}$.
As explained in \cite{FL2}*{Lemma~2.14} that, due to the difference of the virtual dimensions
\begin{equation} \label{e:3}
  \op{vdim}^{\fX}_{g,n,\beta_D} - \op{vdim}^{D}_{g,n,\beta_D} = 2 -2g + (\beta_D, D)^X,
\end{equation}
the corresponding GW invariant $\langle \Omega \rangle^{\fX}_{g, n, \beta_D} =0$ whenever 
\begin{equation} \label{e:ic}
 2 -2g + (\beta_D, D)^X > 0
\end{equation}
Therefore, the coefficient of $\hbar^{g-1} q^{\beta_D} \prod_i t^{\mu_i}_{k_i}$ of $\log \mathcal{D}_{eq}^{\fX}$ vanish.

The vanishing gives an equation on the right hand side of \eqref{e:2}.
Treating all (twisted) invariants on $X$ as known, this equation has as the leading term exactly $(-1)^{1-g} \lambda^{-2 +2g - (\beta_D, D)} I$.
That is, $I$ can be written in terms of invaraints on $D$ and $\fX$ of ``lower orders'', with invariants on $X$ as coefficients. Since invariants on $\fX$ can be expressed in terms of invaraints on $D$ of equal or lower orders, this gives a recursion.
The initial conditions are decided by \eqref{e:ic} when the vanishing, or equivalently the inequlity in \eqref{e:ic}, no longer holds.

\begin{remark}[On insertions of primitive classes]\label{r:prim}
In Proposition \ref{p:1}, we restrict the parametrized insertion $t$ to $t_D$. 
When $t$ is a general class in $H(\fX)$, Equation \eqref{e:2} still holds, although in this case the degree bound \eqref{e:ic} is more complicated.

Since our primary objective is to obtain the GW invariants for $D$, the insertion classes missing from the restriction $t=t_D$ are the primitive classes $H_{prim}(D)$ in $H^{d} (D)$, where $d = \dim_{\mathbb{C}} (D)$.
Recall $\fX$ is a blow-up of $X\times P^1$. Denote the exceptional divisor by $E$ and the inclusion $\iota_E : E\hookrightarrow \fX$. $E$ is a projective bundle over $D$ and admits a projection $\pi:E\rightarrow D$.
By the blowup formula of cohomology, $H^{d+2} (\fX)$ is generated by the images of pushforwards from $H^d(E)$ and pullbacks from $H^{d+2} (X \times P^1)$.
A convenient choice for $H_{prim}(D)$ in \eqref{e:2} can be obtained as follows.

Given a collection of $\omega_i\in H_{prim}(D), 1\leq i\leq m$, consider 
\[\tilde\omega_i = (\iota_E)_*\pi^*\omega_i.\]
It is easy to see that $\tilde\omega_i|_{D_0}=-\lambda \omega_i$. Similar to the above analysis,
\begin{equation} \label{e:1d}
\langle \tilde\omega_1,\ldots,\tilde\omega_m, t_D, \ldots , t_D \rangle_{g,n+m,\beta_D}^\fX = (-\lambda)^m\langle \omega_1 , \ldots , \omega_m, t_D,\ldots,t_D \rangle_{g,n+m,\beta_D}^D + \ldots .
\end{equation}
This leads to a modified degree condition:
The left hand side of \eqref{e:1d} vanishes if
\begin{equation} \label{e:ic'}
 2 -2g + (\beta_D, D)^X > m.
\end{equation}
As before, the vanishing of the LHS produces a relation.
Despite the worse degree condition, it does produce a way to compute GW invariants of $D$ with primitive insertions, whereas the methods in \cite{CGL, GJR2} so far have not.
These relations can be combined with the monodromy invariance requirement of GW invariants. 
More detailed study is in progress in a separate project. 
\end{remark}

\subsection{Calabi--Yau threefolds as the divisor $D$}
Simplifications occur in case $D$ is a Calabi--Yau threefold, when the $n$-pointed Gromov--Witten invariants can be reconstructed from $n=0$ GW invariants (with no insertion).
The generating function for $n=0$ invariants in genus $g$, called genus $g$ partition function, is a function of the Novikov variable $q$ and is independent of $t$, the insertion variables.
See \cite{FL2}*{\S~1.3} for two closed formulae relating $n$-pointed generating functions to the partition function.
Consequently, $\mathcal{D}^D (t_D)$ is completely determined by $\mathcal{D}^D (t_D =0)$, for which we will determine a simplified form of recursion relation.

For simplicity, we spell out the case when $D \subset X$ is the quintic threefold in $P^4$, although all discussions extend to general CY3 without difficulty, albeit with more complicated notations.

Since $t_D =0$, $F^D_g$ depends only on $q^{\beta_D}$. 
Due to \eqref{e:3} and the fact $\op{vdim}^D =0$, $F^{\fX}_g (t_D =0, q)$ is a polynomial in $q$ of degree ${ \leq \displaystyle \frac{2g-2}{5}}$, denoted by $P_g (q)$.
Since the information of $P_g(q)$ is equivalent to our initial conditions in e.g., \cite{FL2}*{Theorem~1.1} in the case of CY3, we can assume that $\mathcal{D}_{eq}^{\fX}$ is the initial data. 
Hence one can rephrase Proposition~\ref{p:1} as follows.
\begin{corollary} \label{c:1}
\[
  \left[ \widehat{M(q)} \left( \mathcal{D}^D \mathcal{D}^{X_0} \right) \right]_{t_D=0}  
  = e^{\sum_{g=0}^{\infty} {\hbar}^{g-1} P_g(q) } \left( \mathcal{D}^{{X}^{\infty}, \so^+} \right)^{-1}.
\]
Note that in the above equation, $P_g$ and $ \mathcal{D}^{X}$ are the initial data and the equation completely determines $\mathcal{D}^D$ from the initial data.
\end{corollary}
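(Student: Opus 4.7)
The plan is a direct specialization of Proposition~\ref{p:1} to the CY3 setting with $t_D = 0$. First, in equation~\eqref{e:2} I would set $t_D = 0$ on both sides. The left-hand side, by definition of the total descendant potential, becomes
\[
\mathcal{D}_{eq}^{\fX}\bigl|_{t_D=0} = \exp\Bigl(\sum_{g \geq 0} \hbar^{g-1} F^{\fX}_g(0,q)\Bigr),
\]
where $F^{\fX}_g(0,q) = \sum_{\beta_D} q^{\beta_D} \langle\,\rangle^{\fX}_{g,0,\beta_D}$ is the zero-insertion partition function on $\fX$, with $\beta_D$ ranging over $\op{NE}(D)$ pushed forward to $\op{NE}(\fX)$.

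The second step is to verify that $F^{\fX}_g(0,q) = P_g(q)$, i.e.\ that this generating function is a polynomial in $q$ of degree at most $(2g-2)/5$. This is exactly the virtual-dimension count in~\eqref{e:3}: for the quintic $D \subset P^4$, the Calabi--Yau condition gives $\op{vdim}^D_{g,0,\beta_D} = 0$, hence $\op{vdim}^{\fX}_{g,0,\beta_D} = 2 - 2g + 5\beta_D$. As soon as $5\beta_D > 2g-2$ this virtual dimension is strictly negative, forcing the zero-pointed invariant to vanish; so only finitely many $\beta_D$ contribute per fixed genus $g$, yielding the claimed polynomial bound.

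Finally I would rearrange equation~\eqref{e:2}. Since $\widehat{M}$ in Proposition~\ref{p:1} acts only on the two factors $\mathcal{D}^D \mathcal{D}^{X_0}$ and the factor $\mathcal{D}^{X_\infty, \so^+}$ stands outside as a (formal) scalar multiplier, one may divide through to obtain
\[
\bigl[\widehat{M(q)}(\mathcal{D}^D \mathcal{D}^{X_0})\bigr]_{t_D=0} = e^{\sum_{g} \hbar^{g-1} P_g(q)}\bigl(\mathcal{D}^{X_\infty, \so^+}\bigr)^{-1},
\]
which is Corollary~\ref{c:1}. The notation $\widehat{M(q)}$ records that after the restriction $t_D = 0$ the remaining operator depends only on the Novikov variable $q$.

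There is no serious technical obstacle: the corollary is essentially a bookkeeping reorganization of Proposition~\ref{p:1} adapted to the CY3 case. The only content beyond substitution and division is the polynomiality of $P_g(q)$, which is immediate from the virtual-dimension argument once $\op{vdim}^D = 0$ is used. The point worth emphasizing is conceptual rather than technical: the rearrangement displays $P_g(q)$ (finitely many coefficients per genus) together with the twisted ambient potential $\mathcal{D}^{X_\infty,\so^+}$ as the complete initial data from which $\mathcal{D}^D$ is recursively determined.
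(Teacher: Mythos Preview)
Your derivation of the displayed equation is correct and matches the paper: the identity is simply Proposition~\ref{p:1} at $t_D=0$, combined with the definition of $P_g(q)$ (which the paper already set up in the paragraph preceding the corollary). One small slip: when $5\beta_D > 2g-2$ you write that $\op{vdim}^{\fX}_{g,0,\beta_D} = 2-2g+5\beta_D$ is ``strictly negative''; it is strictly \emph{positive}. The vanishing of the zero-pointed invariant then follows because one is integrating $1$ over a virtual class of positive dimension, not because the class lives in negative degree.

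The more substantive omission is the second half of the statement: ``the equation completely determines $\mathcal{D}^D$ from the initial data.'' You treat this as a conceptual remark, but in the paper this is the actual content of the proof. The paper argues two things: first, by quantum Riemann--Roch~\eqref{e:qrr}, the twisted potentials $\mathcal{D}^{X_0}$ and $\mathcal{D}^{X_\infty,\sO^+}$ are determined by the untwisted $\mathcal{D}^X$, so the initial data really is just $P_g$ and $\mathcal{D}^X$; second, and crucially, in the coefficient of $\hbar^{g-1} q^d$ of the logarithm of the left-hand side there is a \emph{single highest-order} term involving $D$, namely $\langle\,\rangle^D_{g,0,d}$, appearing with a nonzero constant coefficient. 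This leading-term observation is what makes the equation a genuine recursion for the unknowns $\langle\,\rangle^D_{g,0,d}$ rather than merely a constraint. Your proposal asserts the recursive determination but does not supply this argument; you should add it.
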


\begin{proof}
By \eqref{e:qrr} the generating functions of twisted GW invariants are determined by those of the untwisted GW invariants.
The coefficient of $q^d$ of the logarithm of the LHS of the equation in Corollary~\ref{c:1} has a single highest order term for $D$ as $\langle \cdot \rangle_{g,0,d}$ with nonzero constant coefficient. 
\end{proof}

\begin{remark}
We remark that the operator $\widehat{M(q)}$ involves differentiation with respect to $t$, denoted $\widehat{R}$ in \eqref{e:R}.
One can only set $t_D =0$ after the differentiation. Hence one has to use $n \neq 0$ pointed GW invariants for $D$. However, as already explained, they are all explicitly determined by $0$-points ones.

Secondly, one can formally rewrite \eqref{e:2} as
\[
 \mathcal{D}^D  = \left( \mathcal{D}^{X_0} \right)^{-1} \widehat{M}^{-1}  \left( \mathcal{D}_{eq}^{\fX} \left( \mathcal{D}^{{X}^{\infty}, \so^+} \right)^{-1} \right) ,
\]
When one restrict to CY3 case with $t_D=0$, $ \mathcal{D}_{eq}^{\fX} $ will apparently need to use more than $e^{\sum_{g=0}^{\infty} {\hbar}^{g-1} P_g(q)}$ for the input due to extra insertions created by the differentiation in $ \widehat{M}$. 
However, the differentiation in  $\widehat{M}$ carries degree in $q$, and the apparently extra information is actually in the lower order and by induction was already determined by \eqref{e:2}.
\end{remark}

\subsection{Remarks on hypergraphs}
In a series of papers \cite{aGhg, aGqh}, A.~Givental devised a summation over \emph{hypergraphs} via the localization techniques.
Initially spelled out for the case when the torus fixed points are isolated, this has been extended to more general cases in, e.g., \cite{CGT}, which we will follow.
See also \cites{ypLnotes, LPbook}.

According to Givental's organizations of localization in terms of contributions from hypergraphs, there are only a \emph{finite} number of hypergraphs for each genus.
The above relation can be written in terms of finite hypergraphs.
These hypergraphs are the decorated graphs of the dual graphs of strata in $\Mbar_{g,0}$.
The decoration assigns to each hypervertex a connected component of fixed loci, to which torus fixed curve maps.
The hyperedges are created by $\hat{R}$ acting on a product of $\mathcal{D}^D$ and $\mathcal{D}^{X}$.
The fact that $\hat{R}$ is of the form of exponential of quadratic derivatives in \eqref{e:R} and $\mathcal{D}^Y = \exp F^Y$ creates hypergraphs via Wick's formula. Here we use $Y$ to refer to a general variety independent of the context $(X,D)$.
The finiteness of the hypergraph in each power of $\hbar$, accounting for genus, has to do with the fact that the domain curves are connected.
We very briefly sketch the graphs. The interested readers can consult \cites{aGhg, aGqh, CGT, LPbook}.

In the equivariant setting, as we are now, one can interpret the hypergraphs in terms of the fixed point localization.
For simplicity, assume $g \ge2$. 
The fixed point loci can be labeled by graphs, with vertices as either nodes or a connected union of irreducible compoents mapping to a connected component of the fixed loci $Y^T$.
An edge connects two (not necessarily distinct) vertices if they intersect. 
(Contributions of) fixed loci of $\Mbar_{g,n} (Y, \beta)$ can be indexed by their images under $\op{st} : \Mbar_{g,n} (Y, \beta) \to \Mbar_{g,0}$.
We call a vertex \emph{a hypervertex} if it stablizes to an irreducible component of the stable $(g,0)$ curve.
The (necessarily genus zero) trees of vertices and edges which map to the intersecting points of hypervertices are called \emph{hyperedges}. \footnote{Here our terminology is different from that of \cites{aGhg, aGqh, CGT} and is consistent with \cite{LPbook}. What we call vertices and edges here are called joints and legs, while hypervertices and hyperedges are called vertices and edges there.}
A hypergraph is a connected graph consisting of hypervertices and hyperedges, and with decoration $(g_i, f_i)$ on the the $i$-th hypervertice, where $f_i$ is the connected component of fixed loci $Y^T$ associated to the sub-curve and $g_i$ is the genus of its normalization.

It is easy to see that there are only finitely many hypergraphs for each fixed genus $g$.
There are only finitely many dual stable graphs for $\Mbar_{g,0}$ and the possibility of decoration is finite.
For example, in our case, there are three (single-vertex) hypergraphs with no hyperedge, corresponding to three connected components of fixed loci $D_0, X_0, X_{\infty}$.
When the number of hyperedges is nonzero, the hypervertices must be associated to either $D_0$ or $X_0$, due to the constraint on the curve class.

Hence the recursion relation in the previous subsections can also be phrased as one associated only to finitely many graphs for each genus, as in \cites{GJR, GJR2}, or in the work of Chang, Guo, Li in \cites{CGL, CGL2}.

\begin{remark}
We note that Givental's quantization can be generalized to semisimple case without equivariant theory.
This is the Givental--Teleman classification of semisimple theories.
See \cites{aGhg, cT}.
\end{remark}

\section{Modifications of the masterspace}
In this section, we describe a few modifications of our original masterspace $\fX$ and consequently different recursion relations.
Even though it is unclear whether these new recursions are easier to ``solve'', they bring other types of (twisted, orbifold) invariants into play and may be of interest on their own.

\subsection{Hypersurface and other twisted theories}
We relate hypersurface invariants with invariants of $X$ twisted by other line bundles.

Choose a line bundle $L$ over $X$. Instead of blowing up $X\times \Pp^1$, we consider a $\Pp^1$-bundle $\mathscr P_L=\Pp_{X}(\sO\oplus L))$. There are two sections $(\mathscr P_L)_0$ and $(\mathscr P_L)_\infty$ both of which are isomorphic to $X$, with normal bundles $L$ and $L^{\vee}$, respectively. Denote the hypersurface in $(\mathscr P_L)_0$ by $D_0$. Define
\[
\mathfrak X(L)=\Bl_{D_0} P_L
\]
There is a $\C^*$ action on $P_L$, and it induces an action on $\fX(L)$. As before, denote the strict transforms of $(P_L)_0$ and $(P_L)_\infty$ by $X_0$, $X_\infty$, respectively. There is another connected component of fixed locus in the exceptional locus, isomorphic to $D$. By a slight abuse of notation, denote it as $D_0$. To sum it up, the fixed loci under the induced $\C^*$ action are again,
\[
 \text{(a)} \, D_0, \quad \text{(b)} \, X_0, \quad \text{(c)} \, X_{\infty}.
\]
The difference comes from their normal bundles. One identifies that
\begin{enumerate}
 \item $N_{D_0/\fX(L)}\cong L\oplus \sO_D(D)\otimes L^\vee$, 
 \item $N_{X_0/\fX(L)}\cong \sO_D(-D)\otimes L$, 
 \item $N_{X_\infty/\fX(L)}\cong L^\vee$.
\end{enumerate}
When $L$ is a trivial line bundle, we recover our previous construction. 

Consider a curve class $\beta\in \NE(\fX(L)))$ lying in the subgroup $\NE(D_0)\hookrightarrow \NE(\fX(L))$. Gromov--Witten invariants of genus $g$ with curve class $\beta$ can be recursively determined by lower terms if the following is satisfied.
\begin{equation}\label{eqn:vd}
\text{vdim}(\Mbar_{g,n}(\fX(L),\beta))>\text{vdim}(\Mbar_{g,n}(D_0,\beta)).
\end{equation}
This is explained in \cite[Section 2.3]{FL2}. Briefly, for a set of insertions (restrictions from $X$) whose degrees add up to $\text{vdim}(\Mbar_{g,n}(D_0,\beta))$, we can always find a lift in $\fX$. If we integrate the lift against $[\Mbar_{g,n}(\fX(L),\beta)]^{\text{vir}}$, the integral vanishes due to dimensional reason. Equation \eqref{eqn:vd} happens when
\begin{equation}\label{eqn:vdim}
    \displaystyle\int_\beta \left(c_1(L) + c_1(\sO_D(D)\otimes L^\vee)\right) = \displaystyle\int_\beta c_1(D) > 2(g-1),
\end{equation}
where $\beta$ is regarded as a curve class on $D_0$. Thus, we conclude that the degree condition does not change under the modification. However, the recursion relations now uses different twisted invariants of $X$.

\begin{example}
When $X=\Pp^4$, $D$ is a smooth quintic hypersurface and $L=\sO(1)$, we conclude that quintic invariants with degree $d>(2g-2)/5$ can be recursively determined by invariants of $\Pp^4$ twisted by $\sO(-4)$ and by $\sO(-1)$, plus some lower degree quintic invariants, whereas the ``original'' master space has the quintic invariants related to invariants of $\Pp^4$ twisted $\sO$ and $\sO(-5)$.
\end{example}

\subsection{Quintic $3$-fold and Hurwitz--Hodge integrals}
The construction in this subsection, as it stands now,  only applies to the case when there exists a crepant contraction $\pi: \fX \to \overline{\fX}$, such that $X_0 \subset \fX$ is contracted to a point with quotient singularity. That is, there is a $K$-equivalence, or crepant transformation, between $\fX$ and an orbifold crepant resolution $\tilde{\overline{\fX}} \to \overline{\fX}$.
Abusing the notation, as it is standard in the literature, we identify $\tilde{\overline{\fX}}$ with $\overline{\fX}$.
This is the case for the quintic $3$-folds inside $\Pp^4$, as we shall explain. 
Further developments are in an ongoing project of the first author with Longting Wu.

The construction may start with the deformation to the normal cone $\fX$ when $X=\Pp^4$ and $D=Q$ the quintic hypersurface. In this case, $X_0\cong \Pp^4$ and $N_{X_0/\fX}=\sO_{\Pp^4}(-5)$. Therefore, locally (in fact, Zariski-locally) $X_0$ is a local $\Pp^4$. Let $U\subset \fX$ be the Zariski neighborhood of $X_0$ that is isomorphic to the total space of $\sO(-5)$. It is well-known that $\sO(-5)$ is a crepant resolution of the smooth Deligne--Mumford stack $[\C^5/\mu_5]$. In particular,
\[
U \setminus X_0\cong [\C^5/\mu_5] \setminus [\text{pt}/\mu_5].
\]
As a result, there is a contraction 
\[
 \pi: \fX\rightarrow \overline{\fX},
\]
where $X_0$ is contracted to $[\text{pt}/\mu_5]$. More concretely, what we do here is simply replacing $U$ by $[\C^5/\mu_5]$.

Alternatively, one can describe $\overline{\fX}$ as follows. 
Consider the weighted projective space $\Pp(5,1,1,1,1,1)$. $\Pp^4$ embeds into it in the following way. 
\[
\Pp^4\cong \Pp(1,1,1,1,1)\subset \Pp(5,1,1,1,1,1).
\]
One can check the normal bundle of this $\Pp^4$ is $\sO(5)$. Consider the quintic hypersurface $Q\subset \Pp^4$. The normal bundle of $Q$ inside $\Pp(5,1,1,1,1,1)$ is now $\sO(5)\oplus\sO(5)$. We claim that
\[
\overline{\fX}\cong\Bl_{Q}\Pp(5,1,1,1,1,1).
\]
The only $[\text{pt}/\mu_5]$ corresponds to the stacky point in $\overline{\fX}$, and the strict transform of $\Pp^4$ corresponds to $X_\infty$. The exceptional divisor is isomorphic to $Q\times \Pp^1$, and intersect with $X_\infty$ at a quintic threefold. Such $Q\times \Pp^1$ also appears in $\fX$ as the strip connecting $Q_0$ and $X_\infty$. Thus, the other end that does not intersect $X_\infty$ corresponds to $Q_0$ of $\overline{\fX}$. A rigorous identification is left to the readers.

\begin{remark}
In fact, it is possible to realize both $\fX$ and $\overline{\fX}$ as invariant hypersurfaces of GIT quotients under certain action of $(\C^*)^3$ on $\C^9$. The crepant contraction $\fX \to \overline{\fX}$ is a consequence of wall-crossing of variations of stability conditions. 
The GIT phase of $\overline{\fX}$ occurred in a stimulating discussion in our different project with M.~Shoemaker.
\end{remark}

On $\overline{\fX}$, there is a $\C^*$ action inherited from $\fX$. The fixed loci are $Q_0, [\text{pt}/\mu_5]$ and $X_\infty$. Note that under the birational contraction of $\fX$, a neighborhood of $Q_0$ is left untouched. Therefore, we can similarly choose a curve class $\beta_Q\in \NE(Q_0)\subset\NE(\overline{\fX})$ and apply localization. Let $d$ be the degree of $\beta_Q$ as a curve class in $P^4$. Under the same bound $d>(2g-2)/5$, invariants of $Q$ can be recursively determined by Gromov--Witten invariants of $[\C^5/\mu_5]$ ($\C^*$ acting with weights $(1,1,1,1,1)$). The whole process is completely parallel to \cite{FL2}, and the invariants of $X_\infty$ will vanish by virtual-dimensional reason.

\section{New relations for the ambient space $X$}
So far, the technique has been used to find relations for the hypersurfaces, assuming the data from the ambient space is known.
Now we turn the idea on its head and show that our construction can also be used to find relations for the ambient spaces under certain favorable conditions. 
These favorable conditions are designed so that the GW invariants of $D$ will not enter the recursion relations.
We show two examples in this section.

Both examples use the same set-up as before. Consider the deformation to the normal cone of a smooth pair $(X,D)$ and the same $\C^*$-action has fixed loci $D_0, X_0, X_\infty$. 
However, the curve classes $\beta$ no longer comes from $D_0$.
Our strategy is to choose the conditions, which might involve the geometry of $(X,D)$ or the numerical conditions on the curve class $\beta$ such that the GW invariants on $D_0$ will not appear in recursion relation.

\subsection{Adding fiber classes} \label{s:3.1}

Let $f\in \NE(\fX)$ be the curve class corresponding to a fiber on the exceptional divisor between $D_0$ and $X_0$. Choose a curve class $\beta_X\in \NE(X_0)\subset \NE(\fX)$. Suppose $\alpha_1,\ldots,\alpha_n\in H^*(\fX)$ are insertions. Since we only care about relations on $X_0$, localization on $\Mbar_{g,n}(\fX,\beta_X+kf)$ yields a relation when
\[
\sum\limits_{i=1}^n \deg(\alpha_i) < (1-g)(\dim(X)-2) + \displaystyle\int_{\beta_X} c_1(\sO(-D)) + k + n.
\]
Since we do not wish to involve invariants on $D_0$, certain conditions have to be satisfied. For example, it will work if $\beta_X+kf$ is not numerically equivalent to a connected curve with an irreducible component in $D_0$. (If $(X,D) = (P^4, \text{quintic})$, this is the case when $k \leq 4$.)
We note that this is a very strong condition which limits the applicability. 
%

\begin{example}
We try to give a complete and self-contained statement in the following special case. Let $X$ be a smooth variety, and $D$ be a divisor such that $(\gamma,D)>1$ for
any nonzero $\gamma\in NE(X)$. Fix a nonzero $\beta_X\in NE(X)$ such
that \[(\beta_X,c_1(TX)-D)+1>0.\] Let $N=dim(X)$. In twisted Gromov--Witten theory, we give a weight-$1$ fiberwise $\C^*$ actions on the twisting bundles. Let $\lambda$ be the equivariant parameter.
\begin{proposition}
\begin{enumerate}
\item[(1)] ($D$ is not necessarily effective) Assume the above conditions, we have
\[
\langle \dfrac{1}{\lambda-D-\psi} \rangle_{1,1,\beta_X}^{X,\sO(-D)} = - \dfrac{1}{24\lambda^2}\langle \dfrac{D\cdot c_{N-1}(TX)}{\lambda-D-\psi} \rangle^{X,\sO(-D)}_{0,1,\beta_X} - \dfrac{1}{24\lambda}\langle \dfrac{c_{N-1}(TX)}{\lambda-D-\psi} \rangle^{X,\sO(-D)}_{0,1,\beta_X}.
\]
\item[(2)] Let $D$ be a smooth effective divisor, assume the above conditions, we have
\[
\langle \dfrac{D}{\lambda-D-\psi} \rangle_{1,1,\beta_X}^{X,\sO(-D)} = - \dfrac{1}{24\lambda}\langle \dfrac{ D\cdot c_{N-1}(TX) }{\lambda-D-\psi} \rangle^{X,\sO(-D)}_{0,1,\beta_X} - \dfrac{1}{24}\langle \dfrac{ D\cdot c_{N-2}(TX) }{\lambda-D-\psi} \rangle^{X,\sO(-D)}_{0,1,\beta_X}.
\]
\end{enumerate}
\end{proposition}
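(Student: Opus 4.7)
The strategy is to apply $\C^*$-equivariant virtual localization on $\Mbar_{1,1}(\fX,\beta_X+f)$ for an insertion that specializes, on the $X_0$-contribution, to the propagator $(\lambda-D-\psi)^{-1}$. Using adjunction ($N_{X_0/\fX}=\sO_X(-D)$) and the blow-up computation $\int_f c_1(T\fX)=1$, one obtains
\[
 \vdim \Mbar_{1,1}(\fX,\beta_X + f) = (\beta_X, c_1(TX) - D) + 2.
\]
Under the hypothesis $(\beta_X,c_1(TX)-D)+1>0$, the bound of Section~\ref{s:3.1} applied with $g=1$, $n=1$, $k=1$ then forces $\int_{[\Mbar_{1,1}(\fX,\beta_X+f)]^{\virt}} \op{ev}_1^*(\alpha) = 0$ for the relevant insertion classes. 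Choosing the equivariant class $\alpha=(\lambda-D-\psi_1)^{-1}$ pulled back from $\fX$ produces part (1), and $\alpha=D(\lambda-D-\psi_1)^{-1}$ produces part (2), after expansion in $\lambda^{-1}$.

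Next I would enumerate the $\C^*$-fixed loci contributing for the class $\beta_X+f$. The hypothesis $(\gamma,D)>1$ for every nonzero $\gamma\in \NE(X)$ prevents any fixed map from having a component in $D_0$: a component of class $\gamma\neq 0$ there would combine with an $X_0$-piece of class $\beta_X-\gamma$ plus the $f$-fiber, and the $D$-intersection bookkeeping is incompatible with the constraint from $f$ under this hypothesis. Likewise no component lies over $X_\infty$, since the only $\C^*$-invariant $\Pp^1$'s reaching $X_\infty$ carry classes $\ell$ or $\ell-f$, neither fitting $\beta_X+f$. Consequently every fixed locus has a single ``main'' vertex at $X_0$ of class $\beta_X$, joined by one $f$-edge to an unmarked endpoint on $D_0$. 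The stable genus-$1$, one-marked decorations split into exactly two families: the main vertex is itself a genus-$1$, $1$-pointed curve, or the main vertex is rational with a contracted genus-$1$ tail carrying the marked point.

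Finally, the contribution of each graph is evaluated by the virtual localization formula. The main-vertex integral becomes an $\sO(-D)$-twisted Gromov--Witten invariant of $X$ (reflecting $N_{X_0/\fX}=\sO_X(-D)$), and the edge factor, determined by the splitting $T\fX|_f\cong \sO(2)\oplus \sO^{\dim D}\oplus \sO(-1)$, reproduces the propagator $(\lambda-D-\psi)^{-1}$ at the node. The contracted elliptic tail yields the Hodge integral $\int_{\Mbar_{1,1}}(\lambda-\psi)^{-1} = 1/(24\lambda^2)$; smoothing the tail against the rational main vertex collapses via $e(TX\oplus \sO(-D))$ and the dimensional constraint to the Chern class $c_{N-1}(TX)$ in part (1), and to the combination of $c_{N-1}(TX)$ and $c_{N-2}(TX)$ dictated by the shifted degree in part (2). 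Setting the total localized integral to zero and isolating the genus-$1$ term then produces the stated identities. The principal difficulty is the combinatorial bookkeeping: making sure every contributing stable graph is accounted for, and that the various edge weights, Hodge integrals and Chern-class residues multiply out to precisely the compact right-hand sides without spurious remainders.
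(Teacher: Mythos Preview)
Your outline for part~(2) matches the paper's derivation: localize on $\fX$ with curve class $\beta_X+f$ (i.e.\ $k=1$), use the virtual-dimension gap to force vanishing, and read the identity off the two fixed-locus contributions at $X_0$ and $D_0$. The paper records exactly this.

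There is, however, a genuine gap in your treatment of part~(1). The statement explicitly allows $D$ to be a divisor \emph{class} that is not effective. In that generality there is no smooth hypersurface to blow up, so the masterspace $\fX=\Bl_{D\times\{0\}}(X\times\Pp^1)$ does not exist, and your localization on $\Mbar_{1,1}(\fX,\beta_X+f)$ cannot even be set up. The paper obtains (1) by localizing directly on the $\Pp^1$-bundle $\Pp_X(\sO\oplus\sO(D))$, which is defined for any line bundle $\sO(D)$. There both fixed sections are full copies of $X$, so the node attaching the fiber edge is unconstrained and one obtains the bare propagator $\tfrac{1}{\lambda-D-\psi}$. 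On $\fX$, by contrast, the $f$-edge lands in $D_0$, which forces the node on $X_0$ to lie on $D$ and inserts the extra factor of $D$ into the propagator; this is precisely why $\fX$ with $k=1$ produces (2) rather than (1). Your attempt to recover (1) from $\fX$ by changing the insertion cannot undo this constraint.

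A smaller but related point: the phrase ``the equivariant class $\alpha=(\lambda-D-\psi_1)^{-1}$ pulled back from $\fX$'' is not well-posed, since $\psi_1$ lives on the moduli space, not on $\fX$. The propagator $\tfrac{1}{\lambda-D-\psi}$ is not an insertion one chooses; it is produced by the edge and normal-bundle contributions in the localization formula at the node on $X_0$. Once you correct this, the marked-point bookkeeping in your graph enumeration should be revisited as well (the single marked point appearing on the $X$-side of the relation is the node created by the edge, not an external marking you placed on $\fX$).
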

Relation (2) is derived in the above setup using $k=1$. We note that relation (1) is, however, derived by directly localizing on $\Pp_X(\sO\oplus \sO(D))$ and hence not directly related to our discussion.
We include it as a comparison to (2).
\end{example}

\subsection{Hypersurfaces with trivial Gromov--Witten theory}
Let $X$ be a Fano $3$-fold whose $-K_X$ is base-point free. If we choose $D$ to be a smooth member of $|-K_X|$, it is a K3-surface by the adjunction formula, and hence has vanishing Gromov--Witten invariants except certain degree $0$ invariants. In principle, relations on $X$ can be produced if the choice $\beta= \beta_X+kf\in \NE(\fX)$ ($\beta_X, f$ as in Section~\ref{s:3.1}) and insertions $\alpha_1,\ldots,\alpha_n\in H^*(\fX)$ satisfy
\[
\sum\limits_{i=1}^n \deg(\alpha_i) < (1-g) + k + n.
\]
Since it is independent on $\gamma$ and $k$ can be arbitrarily large, this is a not a very serious constraint. 
For example, the following sample relation holds for all GW invariants of Fano threefolds whose aniticanonical divisors are base-point free.

\begin{proposition}
If $X$ is a Fano $3$-fold with base-point free anticanonical divisor and $\beta_X\in \NE(X)$ is a nonzero curve class, the following relation holds without any extra conditions.
\[
\langle \dfrac{-K_X}{\lambda+K_X-\psi} \rangle_{1,1,\beta_X}^{X,\sO(K_X)} = - \dfrac{1}{24\lambda}\langle \dfrac{ -K_X\cdot c_{2}(TX) }{\lambda+K_X-\psi} \rangle^{X,\sO(K_X)}_{0,1,\beta_X} - \dfrac{1}{24}\langle \dfrac{ K_X^2 }{\lambda+K_X-\psi} \rangle^{X,\sO(K_X)}_{0,1,\beta_X}.
\]
\end{proposition}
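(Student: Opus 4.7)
The plan is to recognize the statement as the specialization of part~(2) of the preceding Proposition to the case $X$ a Fano $3$-fold with $D = -K_X$ a smooth K3 surface, and to verify that the numerical hypothesis $(\gamma, D) > 1$ imposed there can be dispensed with in this setting.

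First observe that with $N = \dim X = 3$ and $c_1(TX) = -K_X = D$, one has $D \cdot c_{N-1}(TX) = -K_X \cdot c_2(TX)$, $D \cdot c_{N-2}(TX) = K_X^2$, $\lambda - D - \psi = \lambda + K_X - \psi$, and the twist $\sO(-D) = \sO(K_X)$; hence the stated formula is precisely the formula of part~(2) after substitution. The second hypothesis of part~(2), namely $(\beta_X, c_1(TX) - D) + 1 > 0$, is automatic since $c_1(TX) - D = 0$ gives $0 + 1 > 0$. Thus only the condition $(\gamma, D) > 1$ needs to be removed.

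I would then run the localization argument of part~(2) verbatim: apply virtual $\C^*$-localization to $\Mbar_{1,1}(\fX, \beta_X + f)$ with $k = 1$, using an equivariant insertion on $\fX$ whose restriction to $X_0$ is $\frac{-K_X}{\lambda + K_X - \psi}$. The degree bound $\sum \deg(\alpha_i) < (1 - g) + k + n = 2$ forces the corresponding integral against $[\Mbar_{1,1}(\fX, \beta_X + f)]^{\virt}$ to vanish, producing an identity $\sum_\Gamma \op{Contr}_\Gamma = 0$ among fixed-locus contributions indexed by decorated graphs with vertices labeled by $D_0$, $X_0$, or $X_\infty$.

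The one point that requires a new input is the vanishing of contributions from graphs carrying a positive-degree $D_0$ vertex. In the setting of part~(2) these were killed by the dimension-count consequence of $(\gamma, D) > 1$; here they vanish automatically because $D$ is a K3 surface, so every positive-degree Gromov--Witten invariant of $D$ vanishes. The surviving contributions then assemble into the LHS and RHS just as in the proof of part~(2): the LHS from a single genus-$1$ vertex of class $\beta_X$ over $X_0$ carrying the $f$-edge, and the RHS from a genus-$0$ vertex of class $\beta_X$ on $X_0$ attached to a contracted genus-$1$ tail whose Hodge-integral contribution supplies the $1/24$ factors (via $\int_{\Mbar_{1,1}} \psi_1 = \tfrac{1}{24}$) paired with the Chern classes $c_2(TX)$ and $K_X^2$ through the Grothendieck--Riemann--Roch computation for the collapsed tail. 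The main obstacle is the careful bookkeeping of $\C^*$-weights, psi-class factors at attaching nodes, and in particular the degree-zero constant-map contributions over $D_0$, which must be shown to either cancel among themselves or be subsumed into the stated right-hand side via the normal bundle $N_{D_0/\fX} \cong \sO \oplus \sO_D(-K_X)$.
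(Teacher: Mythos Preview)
Your approach is correct and matches the paper's: the proposition is presented there without a detailed proof, simply as a sample relation obtained by the method of Section~3.2, which amounts exactly to specializing part~(2) of the Section~3.1 proposition to $N=3$, $D=-K_X$ and replacing the hypothesis $(\gamma,D)>1$ by the vanishing of positive-degree Gromov--Witten invariants of the K3 surface $D$. Your closing worry about degree-zero $D_0$ contributions is not a new obstacle here---those vertices are already present and accounted for in the derivation of part~(2), since the hypothesis $(\gamma,D)>1$ there only ever excluded \emph{positive}-degree $D_0$ vertices.
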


There are likely more equations of this type for any Fano threefolds. 
We wish to be able to report some interesting applications of these equations in the future.

\bibliographystyle{amsxport}
\bibliography{ref}

\end{document}